\documentclass[12pt]{article}
\usepackage[all]{xy}
\usepackage[latin5]{inputenc}
\usepackage{rotating}
\usepackage{amsfonts}
\usepackage{amssymb}
\usepackage{amsmath}
\usepackage{cite}
\usepackage{bbm}
\usepackage{graphicx}
\usepackage{pstricks,pst-math,pst-infixplot}
\usepackage[dotinlabels]{titletoc}
\usepackage{psfrag}

%----------------------------------------------------------
\newcounter{theorem}
%----------------------------------------------------------
\newtheorem{theorem}{Theorem}

\newtheorem{corollary}{Corollary}
\newtheorem{definition}{Definition}

\newtheorem{lemma}{Lemma}
\newtheorem{proposition}{Proposition}
\newtheorem{remark}{Remark}
\newtheorem{question}{Question}
\newenvironment{proof}[1][Proof]{\textbf{#1.} }{\rule{0.5em}{0.5em}}

\usepackage[a4paper,left=2.5cm,right=2.5cm,top=2cm,bottom=2.5cm]{geometry}
%----------------------------------------------------------

%----------------------------------------------------------

\begin{document}

\title{Isometry Classes of Planes in $(\mathbb{R}^3,d_{\infty})$}
\author{Mehmet KILI\c{C}}
\date{ }
\maketitle

\begin{abstract}
We determine geodesics in $\mathbb{R}_{\infty}^n$ (i.e. $(\mathbb{R}^n,d_{\infty})$) and by using this, classify planes up to isometry in $\mathbb{R}_{\infty}^3$.
\end{abstract}

\textbf{Keywords}: geodesic, sector, plane, isometry

2010 Mathematics Subject Classification: 97G40, 51F99, 51K05.

\section{Introduction}

In metric spaces, it is possible to define length of paths. Let $(X,d)$ be a metric space and $\alpha:[0,1]\rightarrow X$ be a path. Then, the length of $\alpha$ is defined as $$\sup_\mathcal{P}\left\{\sum_{i=1}^{n}d(\alpha(t_{i-1}),\alpha(t_i))\right\}$$ over all partitions $P=\{t_0=0,t_1,\ldots,t_n=1\}$ of $[0,1]$ and it is denoted by $L(\alpha)$. If $\alpha$ satisfies $L(\alpha|_{[0,t]})=t\cdot L(\alpha)$ for all $t\in(0,1)$, then $\alpha$ is called a natural path. It is clear that every path has a natural reparametrization. If the path $\alpha$ is natural and satisfies $L(\alpha)=d(x,y)$, where $\alpha(0)=x$, $\alpha(1)=y$, then $\alpha$ is called a geodesic. For a metric space and any two points in it, there may not exist any geodesic between these points. For example, $S^1=\{(x,y)\in\mathbb{R}^2\,|\, x^2+y^2=1\}$ with induced standard metric and for $a=(1,0)$ and $b=(-1,0)$, there is no path connecting these points whose length is less than $\pi$, but the distance between $a$ and $b$ is equal to $2$ according to the standard metric. If there is at least one geodesic between any two points in a metric space,  this metric space is called ''geodesic space'' (\cite{bri}, \cite{pap}), or ''strictly intrinsic space'' according to another terminology (\cite{bur}). So, $S^1$ is not a geodesic space with the induced standard metric, but it becomes a geodesic space with the ''arc length metric''.  $(\mathbb{R}^n,d_p)$ with
\[
d_p((x_1,x_2,\ldots,x_n),(y_1,y_2\ldots,y_n))=\sqrt[p]{|x_1-y_1|^p+|x_2-y_2|^p+\cdots+|x_n-y_n|^p}
\]
for $1\leq p<\infty$ and $(\mathbb{R}^n,d_{\infty})$ with
\[
d_{\infty}((x_1,x_2,\ldots,x_n),(y_1,y_2\ldots,y_n))=\max_{i=1}^n\{|x_i-y_i|\}
\]
are also geodesic spaces.  The suitably parameterized segment between the points $(x_1,x_2,\ldots,x_n)$ and $(y_1,y_2\ldots,y_n)$ is a geodesic in $(\mathbb{R}^n,d_p)$ for $1\leq p\leq\infty$.

Let $(X,d_X)$ and $(Y,d_Y)$ be two metric spaces and $f$ be a function from $X$ onto $Y$. $f$ is called an isometry if $d_Y(f(x),f(y))=d_X(x,y)$ for all $x,y\in X$. If there is an isometry between two metric spaces, then they are called isometric spaces. It is clear that notion of isometry deals with not only set on the space but also metric on the space. For example, $S^1$ with the induced standard metric from $\mathbb{R}^2$ and the same set with the ''arc length metric'' are not isometric.

In our previous work (\cite{kil}), we have noted that the plane $$L=\{(x,y,z)\,|\ x+y+z=0\}\subseteq\mathbb{R}_{\infty}^3$$ with the induced metric is not hyperconvex; therefore, it is not isometric to the plane $\mathbb{R}_{\infty}^2$ because $\mathbb{R}_{\infty}^2$ is hyperconvex. But it is clear that the $xy-$plane in $\mathbb{R}_{\infty}^3$ is isometric to the plane $\mathbb{R}_{\infty}^2$; hence, all planes in $\mathbb{R}_{\infty}^3$ are not isometric to each other. Therefore, the following question arises:
\begin{question}
What are the isometry classes of planes in $\mathbb{R}_{\infty}^3$?
\end{question}
In this paper, we have answered this question without using the notion of hyperconvexity. In order to do that, first, we have determined geodesics in $\mathbb{R}_{\infty}^n$ and then we have achieved our main aim.

\section{Geodesics in $\mathbb{R}_{\infty}^n$}

\begin{definition}
For $p=(p_1,p_2,\ldots,p_n)\in \mathbb{R}^n_{\infty}$, we define
\[
S_i^{\varepsilon}(p)=\{q=(q_1,q_2,\ldots,q_n)\in\mathbb{R}^n|\ d_{\infty}(p,q)=\varepsilon(q_i-p_i)\}
\]
for $i=1,2,\ldots,n$ and
$\varepsilon=\pm$ and call them the sectors at the point $p$ (see
Fig.~\ref{fd1} and Fig.~\ref{fd2}).
\end{definition}

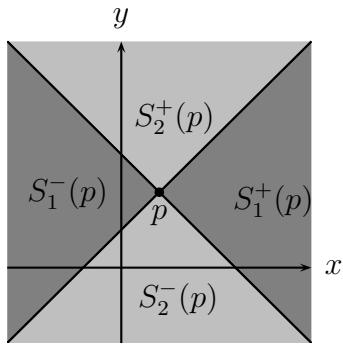
\begin{figure}[h]
\begin{center}
\begin{pspicture*}(-1.5,-1)(3,3.5)
%\psset{unit=0.7}
\pspolygon*[linecolor=gray](0.5,1)(2.5,3)(2.5,-1)
\psline(0.5,1)(2.5,3) \psline(0.5,1)(2.5,-1)
\pspolygon*[linecolor=lightgray](0.5,1)(2.5,3)(-1.5,3)
\psline(0.5,1)(-1.5,3) \psline(0.5,1)(2.5,3)
\pspolygon*[linecolor=gray](0.5,1)(-1.5,-1)(-1.5,3)
\psline(0.5,1)(-1.5,-1) \psline(0.5,1)(-1.5,3)
\pspolygon*[linecolor=lightgray](0.5,1)(-1.5,-1)(2.5,-1)
\psline(0.5,1)(2.5,-1) \psline(0.5,1)(-1.5,-1)
\psline{->}(-1.5,0)(2.5,0) \uput[r](2.5,0){$x$}
\psline{->}(0,-1.5)(0,3) \uput[u](0,3){$y$} \psdot(0.5,1)
\uput[d](0.5,1){$p$} \uput[d](2,1.3){$S_1^+(p)$}
\uput[u](0.7,1.6){$S_2^+(p)$} \uput[l](0,1){$S_1^-(p)$}
\uput[d](0.75,0){$S_2^-(p)$}
\end{pspicture*}
\caption{Sectors of a point $p$ in $\mathbb{R}^2_{\infty}$.}
\label{fd1}
\end{center}
\end{figure}

\begin{theorem}
\label{d1}
Let $p=(p_1,p_2,\ldots,p_n)$, $q=(q_1,q_2,\ldots,q_n)\in\mathbb{R}^n$ be two points, $q\in S_i^\varepsilon(p)$ and $\alpha:[0,1]\rightarrow \mathbb{R}^n$ be a natural path such that $\alpha(0)=p$ and $\alpha(1)=q$. Then $\alpha$ is a geodesic in $\mathbb{R}_\infty^n$ if and only if $\alpha(t')\in S_i^\varepsilon(\alpha(t))$ for all $t,t'\in [0,1]$ such that $t<t'$.
\end{theorem}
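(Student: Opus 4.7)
The plan is to prove both directions by exploiting that membership in $S_i^{\varepsilon}(p)$ lets the $\ell^\infty$ distance be recovered as the signed difference of a single coordinate, which is additive.

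For the ``if'' direction, I would argue that the hypothesis $\alpha(t')\in S_i^{\varepsilon}(\alpha(t))$ makes consecutive distances collapse to a telescoping sum. For any partition $0=t_0<t_1<\cdots<t_m=1$,
\[
\sum_{k=1}^{m} d_{\infty}\bigl(\alpha(t_{k-1}),\alpha(t_k)\bigr)=\sum_{k=1}^{m}\varepsilon\bigl(\alpha_i(t_k)-\alpha_i(t_{k-1})\bigr)=\varepsilon(q_i-p_i)=d_{\infty}(p,q).
\]
Taking the supremum over partitions gives $L(\alpha)=d_{\infty}(p,q)$, so (being natural) $\alpha$ is a geodesic.

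For the ``only if'' direction, suppose $\alpha$ is a geodesic, and fix $0\leq t<t'\leq 1$. Using that $\alpha$ is natural and that $d_{\infty}(x,y)\leq L(\beta)$ for any path $\beta$ from $x$ to $y$, the chain
\[
d_{\infty}(p,q)\leq d_{\infty}(p,\alpha(t))+d_{\infty}(\alpha(t),\alpha(t'))+d_{\infty}(\alpha(t'),q)\leq L(\alpha|_{[0,t]})+L(\alpha|_{[t,t']})+L(\alpha|_{[t',1]})=L(\alpha)=d_{\infty}(p,q)
\]
forces equality throughout; set $a=d_{\infty}(p,\alpha(t))$, $b=d_{\infty}(\alpha(t),\alpha(t'))$, $c=d_{\infty}(\alpha(t'),q)$, so $a+b+c=\varepsilon(q_i-p_i)$. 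Expanding $\varepsilon(q_i-p_i)$ telescopically,
\[
\varepsilon(\alpha_i(t)-p_i)+\varepsilon(\alpha_i(t')-\alpha_i(t))+\varepsilon(q_i-\alpha_i(t'))=a+b+c,
\]
and each of the three terms on the left is bounded above by the corresponding one on the right (since $\varepsilon(y_i-x_i)\leq|y_i-x_i|\leq d_{\infty}(x,y)$). Equality of the sums forces termwise equality, and in particular $\varepsilon(\alpha_i(t')-\alpha_i(t))=b=d_{\infty}(\alpha(t),\alpha(t'))$, i.e., $\alpha(t')\in S_i^{\varepsilon}(\alpha(t))$.

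The argument is essentially bookkeeping; the one point worth care is that in the ``only if'' direction one needs simultaneously the additivity of geodesic length (to make the triangle inequalities tight) and the sign structure of the $S_i^{\varepsilon}$ condition (to promote the tight triangle inequality on the three subsegments to the sector membership on the middle one). I do not anticipate a substantial obstacle beyond organizing these two observations.
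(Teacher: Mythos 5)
Your proof is correct and follows essentially the same route as the paper: both directions hinge on the partition $\{0,t,t',1\}$, the telescoping of the $i$-th coordinate, and the inequality $\varepsilon(y_i-x_i)\leq d_{\infty}(x,y)$. The only cosmetic difference is that you phrase the ``only if'' direction as forcing termwise equality in a chain of inequalities, where the paper derives the contradiction $L(\alpha)>d_{\infty}(p,q)$ from a strict middle term.
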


\begin{proof}
($\Rightarrow$)

Let $\alpha=(\alpha_1,\alpha_2,\ldots,\alpha_n)$ be a geodesic and assume that $\alpha(t')\notin S_i^\varepsilon(\alpha(t))$ for some $t<t'$. Then, we have $d_\infty(\alpha(t),\alpha(t'))>\varepsilon(\alpha_i(t')-\alpha_i(t))$. So if we take the partition $0<t<t'<1$ of $[0,1]$, we have
\begin{eqnarray*}
d_\infty(\alpha(0),\alpha(t))+d_\infty(\alpha(t),\alpha(t'))+d_\infty(\alpha(t'),\alpha(1))&>&\\
\varepsilon(\alpha_i(t)-\alpha_i(0))+\varepsilon(\alpha_i(t')-\alpha_i(t))+\varepsilon(\alpha_i(1)-\alpha_i(t'))&=&\varepsilon(\alpha_i(1)-\alpha_i(0))\\
&=&\varepsilon(q_i-p_i)\\
&=&d_\infty(p,q).
\end{eqnarray*}
This leads to  the contradiction that $L(\alpha)>d_\infty(p,q)$.

($\Leftarrow$)
Let $0=t_0<t_1<\cdots<t_n=1$ be an arbitrary partition of $[0,1]$. Since $\alpha(t_j)\in S_i^\varepsilon(\alpha(t_{j-1}))$ for all $j=1,2,\ldots,n$, we have $d_\infty(\alpha(t_j),\alpha(t_{j-1}))=\varepsilon(\alpha_i(t_j)-\alpha_i(t_{j-1}))$; so,
\begin{eqnarray*}
\sum_{j=1}^nd_\infty(\alpha(t_j),\alpha(t_{j-1}))&=&\sum_{j=1}^n\varepsilon(\alpha_i(t_j)-\alpha_i(t_{j-1})) \\ &=&
\varepsilon(\alpha_i(1)-\alpha_i(0))\\&=&\varepsilon(q_i-p_i)\\&=&d_{\infty}(p,q).
\end{eqnarray*}
This implies that $L(\alpha)=d_{\infty}(p,q)$ and that $\alpha$ is a geodesic.
\end{proof}

\begin{figure}[h!]
\begin{center}
\begin{pspicture*}(-2,-0.5)(8,6.5)
\psset{unit=0.85}
\pspolygon*[linecolor=lightgray](0,4)(3.25,2.75)(6,6)(2,6)(0,4)
\psline(0,4)(4,4)(4,0)
\psline(0,4)(2,6)(6,6)(6,2)(4,0)
\psline(6,6)(4,4)
\psline(4,0)(0,0)(0,4)
\psline[linewidth=0.5pt, linestyle=dashed](0,0)(2,2)(6,2)
\psline[linewidth=0.5pt, linestyle=dashed](2,2)(2,6)
\psdot(3.25,2.75)
\uput[d](3.25,2.75){$O$}
\psline(3.25,2.75)(2,6)
\psline(3.25,2.75)(6,6)
\psline(3.25,2.75)(0,4)
\psline(3.25,2.75)(4,4)
\uput[u](3.25,4.5){$S_3^{+}(O)$}
\psline{->}(3.25,2.75)(2.9,2.4)
\uput[d](2.75,2.75){$x$}
\psline{->}(3.25,2.75)(3.75,2.75)
\uput[r](3.45,2.88){$y$}
\psline{->}(3.25,2.75)(3.25,3.25)
\uput[u](3.25,3.25){$z$}
\end{pspicture*}
\caption{The sector $S_3^{+}(O)$ of the origin in $\mathbb{R}^3_{\infty}$.}
\label{fd2}
\end{center}
\end{figure}
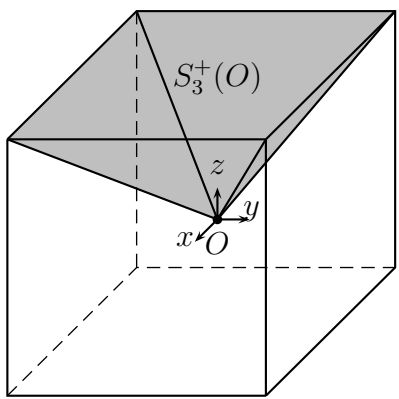

Theorem~\ref{d1} can be restated as follows: Let $\varepsilon$ and $i$ be such that $q\in S_i^{\varepsilon}(p)$, then the natural path $\alpha$ between $p$ and $q$ is a geodesic if and only if when the sectors $S_i^{\varepsilon}(.)$ travel on the image of $\alpha$, the rest of the path is contained from the sector at every point (see Figure~\ref{fd3}).

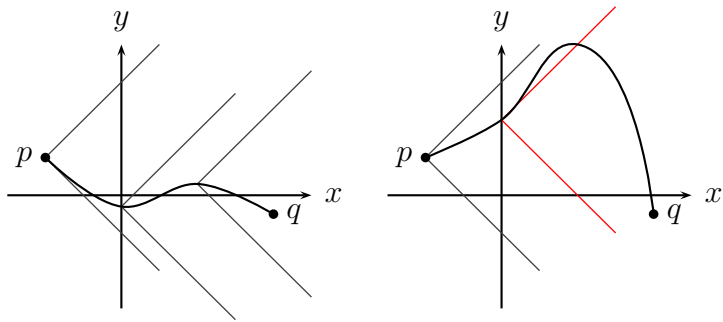
\begin{figure}[h]
\begin{center}
\begin{pspicture*}(-1.5,-2)(8,2.5)
\psline{->}(-1.5,0)(2.5,0) \uput[r](2.5,0){$x$}
\psline{->}(0,-1.5)(0,2) \uput[u](0,2){$y$}
\psline[linewidth=0.5pt, linecolor=darkgray](0.5,2)(-1,0.5)(0.5,-1)
\psline[linewidth=0.5pt, linecolor=darkgray](1.5,1.35)(0,-0.15)(1.5,-1.65)
\psline[linewidth=0.5pt, linecolor=darkgray](2.5,1.65)(1,0.15)(2.5,-1.35)
\psdot(-1,0.5)
\uput[l](-1,0.5){$p$}
\psdot(2,-0.25)
\uput[r](2,-0.25){$q$}
\pscurve(-1,0.5)(0,-0.15)(1,0.15)(2,-0.25)

\psline{->}(3.5,0)(7.5,0) \uput[r](7.5,0){$x$}
\psline{->}(5,-1.5)(5,2) \uput[u](5,2){$y$}
\psline[linewidth=0.5pt, linecolor=darkgray](5.5,2)(4,0.5)(5.5,-1)
\psline[linewidth=0.5pt, linecolor=red](6.5,2.5)(5,1)(6.5,-0.5)
\psdot(4,0.5)
\uput[l](4,0.5){$p$}
\psdot(7,-0.25)
\uput[r](7,-0.25){$q$}
\pscurve(4,0.5)(5,1)(6,2)(7,-0.25)
\end{pspicture*}
\caption{Two paths between $p$ and $q$ in $\mathbb{R}^2_{\infty}$ one of which (on the left) is a geodesic but the other is not.}
\label{fd3}
\end{center}
\end{figure}

Note that for $p$ and $q$ in $\mathbb{R}^2_{\infty}$, if the points are in a diagonal position (i.e. there is $t\in \mathbb{R}$ such that $p=q+t\cdot(1,1)$ or $p=q+t\cdot(1,-1)$), then there is only one geodesic between $p$ and $q$. Of course, this is a line segment. If the points are not in a diagonal position, then there are infinitely many geodesics between these points. Likewise, for $p$ and $q$ in $\mathbb{R}^3_{\infty}$, if the points are in a cubic diagonal position (i.e. there is $t\in \mathbb{R}$ such that $p=q+t\cdot(1,1,1)$, $p=q+t\cdot(1,,1,-1)$, $p=q+t\cdot(1,,-1,1)$ or $p=q+t\cdot(-1,,1,1)$), then there is only one geodesic (which is, still, a line segment) between $p$ and $q$. If the points are not in a cubic diagonal position, then there are infinitely many geodesics between these points.

\begin{figure}[h]
\begin{center}
\begin{pspicture*}(-1.5,-1)(3,3.5)
\pspolygon*[linecolor=gray](0.5,1)(2.5,3)(2.5,-1)
\psline(0.5,1)(2.5,3) \psline(0.5,1)(2.5,-1)
\pspolygon*[linecolor=lightgray](0.5,1)(-1.5,-1)(2.5,-1)
\psline(0.5,1)(2.5,-1) \psline(0.5,1)(-1.5,-1)
\psline{->}(-1.5,0)(2.5,0) \uput[r](2.5,0){$x$}
\psline{->}(0,-1.5)(0,3) \uput[u](0,3){$y$} \psdot(0.5,1)
\uput[d](0.5,1){$p$} \uput[d](2,1.3){$S_1^+(p)$}
\uput[d](0.75,0){$S_2^-(p)$}
\psdot(2,-0.5) \uput[d](2,-0.5){$q$}
\end{pspicture*}
\caption{The points $p$ and $q$ are in a diagonal position.}
\label{fd4}
\end{center}
\end{figure}
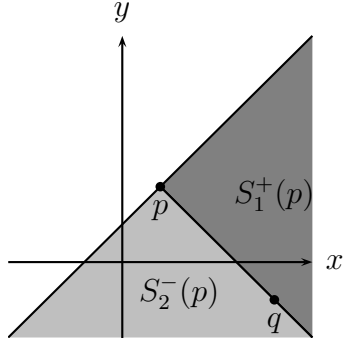

Note that $p$ and $q$ are in a diagonal position in $\mathbb{R}^2_{\infty}$ if and only if $q\in S_1^{\varepsilon}(p)\cap S_2^{\delta}(p)$ where $\varepsilon$ and $\delta$ are plus or minus. Likewise, $p$ and $q$ are in a cubic diagonal position in $\mathbb{R}^3_{\infty}$ if and only if $q\in S_1^{\varepsilon}(p)\cap S_2^{\delta}(p)\cap S_3^{\gamma}(p)$ where $\varepsilon$, $\delta$ and $\gamma$ are plus or minus (see Figure~\ref{fd4} and Figure~\ref{fd5}).

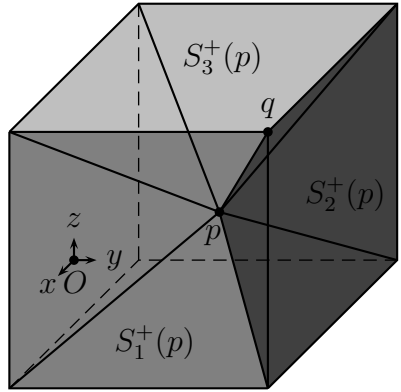
\begin{figure}[h!]
\begin{center}
\begin{pspicture*}(-2,-0.5)(8,6)
\psset{unit=0.85}
\pspolygon*[linecolor=lightgray](0,4)(3.25,2.75)(6,6)(2,6)(0,4)
\pspolygon*[linecolor=gray](0,4)(0,0)(4,0)(4,4)(0,4)
\pspolygon*[linecolor=darkgray](4,0)(6,2)(6,6)(4,4)(3.25,2.75)(4,0)
\psline(0,0)(3.25,2.75)(4,0)
\psline(3.25,2.75)(6,2)
\psline(0,4)(4,4)(4,0)
\psline(0,4)(2,6)(6,6)(6,2)(4,0)
\psline(6,6)(4,4)
\psline(4,0)(0,0)(0,4)
\psline[linewidth=0.5pt, linestyle=dashed](0,0)(2,2)(6,2)
\psline[linewidth=0.5pt, linestyle=dashed](2,2)(2,6)
\psdot(3.25,2.75)
\uput[d](3.15,2.75){$p$}
\psline(3.25,2.75)(2,6)
\psline(3.25,2.75)(6,6)
\psline(3.25,2.75)(0,4)
\psline(3.25,2.75)(4,4)
\uput[u](3.3,4.7){$S_3^{+}(p)$}
\psdot(1,2)
\uput[d](1.02,2.03){$O$}
\psline{->}(1,2)(0.75,1.75)
\uput[d](0.6,1.9){$x$}
\psline{->}(1,2)(1.35,2)
\uput[r](1.3,2){$y$}
\psline{->}(1,2)(1,2.35)
\uput[u](1,2.3){$z$}
\psdot(4,4)
\uput[u](4,4){$q$}
\uput[d](5.2,3.5){$S_2^+(p)$}
\uput[u](2.25,0.25){$S_1^+(p)$}
\end{pspicture*}
\caption{The points $p$ and $q$ are in a cubic diagonal position.}
\label{fd5}
\end{center}
\end{figure}

Now, let us consider a geodesic between two points in $\mathbb{R}^3_{\infty}$ and one belongs to intersection of only two sectors of the other one. Theorem~\ref{d1} implies that the image of such a geodesic must belong to the plane where these sectors intersect. So every geodesic between such points must be a planar curve (see Figure~\ref{fd6}).

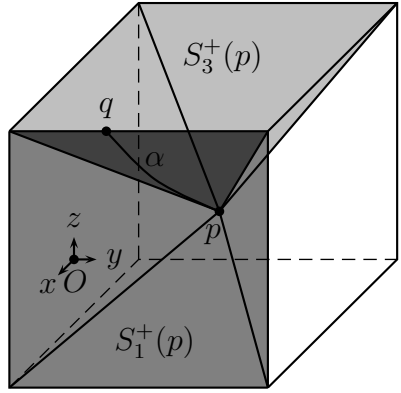
\begin{figure}[h!]
\begin{center}
\begin{pspicture*}(-2,-0.5)(8,6)
\psset{unit=0.85}
\pspolygon*[linecolor=lightgray](0,4)(3.25,2.75)(6,6)(2,6)(0,4)
\pspolygon*[linecolor=gray](0,4)(0,0)(4,0)(4,4)(0,4)
\pspolygon*[linecolor=darkgray](3.25,2.75)(4,4)(0,4)(3.25,2.75)
\psline(0,0)(3.25,2.75)(4,0)
\psline(0,4)(4,4)(4,0)
\psline(0,4)(2,6)(6,6)(6,2)(4,0)
\psline(6,6)(4,4)
\psline(4,0)(0,0)(0,4)
\psline[linewidth=0.5pt, linestyle=dashed](0,0)(2,2)(6,2)
\psline[linewidth=0.5pt, linestyle=dashed](2,2)(2,6)
\psdot(3.25,2.75)
\uput[d](3.15,2.75){$p$}
\psline(3.25,2.75)(2,6)
\psline(3.25,2.75)(6,6)
\psline(3.25,2.75)(0,4)
\psline(3.25,2.75)(4,4)
\uput[u](3.3,4.7){$S_3^{+}(p)$}
\psdot(1,2)
\uput[d](1.02,2.03){$O$}
\psline{->}(1,2)(0.75,1.75)
\uput[d](0.6,1.9){$x$}
\psline{->}(1,2)(1.35,2)
\uput[r](1.3,2){$y$}
\psline{->}(1,2)(1,2.35)
\uput[u](1,2.3){$z$}
\psdot(1.5,4)
\uput[u](1.5,4){$q$}
\uput[u](2.25,0.25){$S_1^+(p)$}
\pscurve(3.25,2.75)(2.25,3.25)(1.5,4)
\uput[u](2.25,3.25){$\alpha$}
\end{pspicture*}
\caption{A geodesic $\alpha$ between the points $p$ and $q$.}
\label{fd6}
\end{center}
\end{figure}

\section{Planes in $\mathbb{R}_{\infty}^3$}

Let $(X,d)$ be a metric space, $x$ and $y$ be any points in $X$. Then denote the number of geodesics between $x$ and $y$ by $\tau(x,y)$. For example, let $p$ and $q$ be in $\mathbb{R}_{\infty}^3$, then $\tau(p,q)=1$ if $p$ and $q$ are in a cubic diagonal position; otherwise $\tau(p,q)=\infty$.

Let $(X,d)$ be a metric space, $x\in X$ and $\varepsilon\in\mathbb{R}^+$. We define
\[
\nu(x,\varepsilon)=|\{y\in S_{\varepsilon}(x)\,|\ \tau(x,y)=1\}|
\]
where $S_{\varepsilon}(x)$ is the boundary of the disc of $x$ with radius $\varepsilon$: $$S_{\varepsilon}(x)=\{y\in X\,|\ d(x,y)=\varepsilon\}\,.$$
One can easily prove the following two propositions:

\begin{proposition}
Let $(X,d_X)$, $(Y,d_Y)$ be two metric space and $f:X\rightarrow Y$ be an isometry. Then we have
\[
\tau(x_1,x_2)=\tau(f(x_1),f(x_2))
\]
for all $x_1,x_2\in X$.
\end{proposition}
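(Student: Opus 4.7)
The plan is to show that an isometry induces a bijection between the set of geodesics from $x_1$ to $x_2$ in $X$ and the set of geodesics from $f(x_1)$ to $f(x_2)$ in $Y$; the counting equality $\tau(x_1,x_2)=\tau(f(x_1),f(x_2))$ then follows immediately.

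First I would note that an isometry, in the paper's sense, is a distance-preserving map from $X$ onto $Y$; distance preservation forces injectivity (since $d_Y(f(x),f(y))=0$ implies $d_X(x,y)=0$), so $f$ is a bijection whose inverse $f^{-1}:Y\rightarrow X$ is again an isometry. Next, I would show that for any path $\alpha:[0,1]\rightarrow X$, the composition $f\circ\alpha:[0,1]\rightarrow Y$ is a path with $L(f\circ\alpha)=L(\alpha)$. This is immediate from the definition of length: for every partition $\mathcal{P}=\{t_0,\ldots,t_n\}$ of $[0,1]$,
\[
\sum_{i=1}^{n} d_Y\bigl(f(\alpha(t_{i-1})),f(\alpha(t_i))\bigr)=\sum_{i=1}^{n} d_X\bigl(\alpha(t_{i-1}),\alpha(t_i)\bigr),
\]
so taking suprema over $\mathcal{P}$ yields equality of the two lengths. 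The same identity applied to each restriction $\alpha|_{[0,t]}$ shows that $f\circ\alpha$ is natural if and only if $\alpha$ is.

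Now I would combine these facts with the equality $d_Y(f(x_1),f(x_2))=d_X(x_1,x_2)$: if $\alpha$ is a geodesic from $x_1$ to $x_2$, then $f\circ\alpha$ is a natural path from $f(x_1)$ to $f(x_2)$ whose length equals $L(\alpha)=d_X(x_1,x_2)=d_Y(f(x_1),f(x_2))$, so $f\circ\alpha$ is a geodesic. Conversely, applying the same argument to $f^{-1}$ shows that if $\beta$ is a geodesic from $f(x_1)$ to $f(x_2)$, then $f^{-1}\circ\beta$ is a geodesic from $x_1$ to $x_2$. Hence the map $\alpha\mapsto f\circ\alpha$ is a bijection between the two sets of geodesics, with inverse $\beta\mapsto f^{-1}\circ\beta$.

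I do not anticipate any real obstacle: the only slightly delicate point is to be explicit about surjectivity and the existence of the inverse isometry, so that one really obtains a two-sided correspondence of geodesics (and in particular an equality of cardinalities, even when both sides are infinite). Once this bijection is in place, $\tau(x_1,x_2)=\tau(f(x_1),f(x_2))$ is just the statement that the two sets have the same size.
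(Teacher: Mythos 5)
Your proof is correct and complete: the paper leaves this proposition to the reader (``One can easily prove the following two propositions''), and your argument --- that a surjective distance-preserving map is a bijection with isometric inverse, that composition with $f$ preserves lengths, naturality, and endpoints, and hence induces a bijection between the two sets of geodesics --- is exactly the standard argument the paper evidently has in mind. Your attention to surjectivity and the two-sided correspondence is the right level of care, since equality of cardinalities (including infinite ones) genuinely requires the bijection rather than just one injection.
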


\begin{proposition}
\label{d2}
Let $(X,d_X)$, $(Y,d_Y)$ be two metric space and $f:X\rightarrow Y$ be an isometry. Then we have
\[
\nu(x,\varepsilon)=\nu(f(x),\varepsilon)
\]
for all $x\in X$ and $\varepsilon\in\mathbb{R}^+$.
\end{proposition}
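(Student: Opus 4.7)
The plan is to show that the isometry $f$ restricts to a bijection between the two sets whose cardinalities are being compared; once that is done the equality $\nu(x,\varepsilon)=\nu(f(x),\varepsilon)$ is immediate. I would break this into the three steps below.

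First I would observe that any isometry, in the sense used in the paper (a surjection preserving the metric), is automatically a bijection: if $f(a)=f(b)$, then $d_X(a,b)=d_Y(f(a),f(b))=0$, so $a=b$. Moreover, $f$ sends the sphere $S_\varepsilon(x)$ into $S_\varepsilon(f(x))$, since $d_Y(f(x),f(y))=d_X(x,y)=\varepsilon$ whenever $y\in S_\varepsilon(x)$, and conversely every $z\in S_\varepsilon(f(x))$ has a preimage $y=f^{-1}(z)\in X$ with $d_X(x,y)=d_Y(f(x),z)=\varepsilon$, so $y\in S_\varepsilon(x)$. Hence $f$ restricts to a bijection $S_\varepsilon(x)\to S_\varepsilon(f(x))$.

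Next I would invoke the preceding proposition, which guarantees that $\tau(x_1,x_2)=\tau(f(x_1),f(x_2))$ for all $x_1,x_2\in X$. Applied to the points $x$ and $y\in S_\varepsilon(x)$, this gives the equivalence
\[
\tau(x,y)=1 \quad\Longleftrightarrow\quad \tau(f(x),f(y))=1.
\]
Combining this with the first step, the restriction of $f$ to $\{y\in S_\varepsilon(x)\mid \tau(x,y)=1\}$ is a bijection onto $\{z\in S_\varepsilon(f(x))\mid \tau(f(x),z)=1\}$. Taking cardinalities then yields $\nu(x,\varepsilon)=\nu(f(x),\varepsilon)$.

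There is no real obstacle here, since the proposition is essentially a formal consequence of the definitions together with the previous proposition. The only point deserving a line of care is the verification that $f$ maps $S_\varepsilon(x)$ \emph{onto} $S_\varepsilon(f(x))$ (not merely into it), which is why the surjectivity built into the paper's definition of isometry is used; this is exactly the place where one passes from a metric-preserving map to a bijection of spheres.
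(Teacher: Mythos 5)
Your argument is correct and complete: the paper itself omits any proof of this proposition (it merely remarks that ``one can easily prove'' it), and your three steps --- injectivity of a surjective distance-preserving map, the bijection between the spheres $S_\varepsilon(x)$ and $S_\varepsilon(f(x))$, and the invariance of $\tau$ from the preceding proposition --- are exactly the details being left to the reader. Nothing further is needed.
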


\begin{figure}[h!]
\begin{center}
\begin{pspicture*}(-1.25,-0.5)(3,3)
\pspolygon*[linecolor=lightgray](-0.75,-0.25)(1.75,-0.25)(1.75,2.25)(-0.75,2.25)(-0.75,-0.25)
\pspolygon(-0.75,-0.25)(1.75,-0.25)(1.75,2.25)(-0.75,2.25)(-0.75,-0.25)
\psline{->}(-1.25,0)(2.5,0) \uput[r](2.5,0){$x$}
\psline{->}(0,-0.5)(0,2.75) \uput[r](0,2.75){$y$} \psdot(0.5,1)
\uput[d](0.5,1){$p$}
\psdots(-0.75,-0.25)(1.75,-0.25)(1.75,2.25)(-0.75,2.25)
\end{pspicture*}
\caption{Four points on the boundary of the $\varepsilon-$disc of the point $p$ which are connected to $p$ by only one geodesic.}
\label{fd7}
\end{center}
\end{figure}
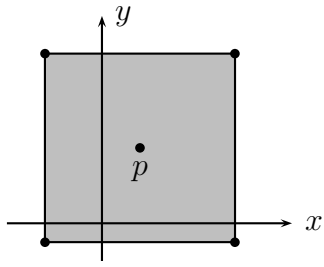

In the plane $\mathbb{R}_{\infty}^2$, for any point $p$ in it and for any positive number $\varepsilon$, $\nu(p,\varepsilon)=4$. These four points are obviously the vertices of the boundary of the $\varepsilon-$disc of the point $p$ which is a square (see Figure~\ref{fd7}).

In $\mathbb{R}_{\infty}^3$, for any point $p$ in it and for any positive number $\varepsilon$, $\nu(p,\varepsilon)=8$. These eight points are obviously the vertices of the boundary of the $\varepsilon-$disc of the point $p$ which is a cube (see Figure~\ref{fd8}).

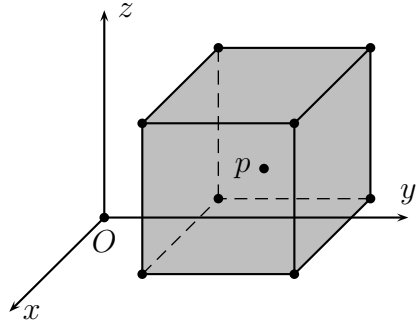
\begin{figure}[h!]
\begin{center}
\begin{pspicture*}(-2,-1.25)(4,3.75)
%\psset{unit=0.6}
\pspolygon*[linecolor=lightgray](0,0)(2,0)(3,1)(3,3)(1,3)(0,2)(0,0)
\psline(0,2)(2,2)(2,0)
\psline(0,2)(1,3)(3,3)(3,1)(2,0)
\psline(3,3)(2,2)
\psline(2,0)(0,0)(0,2)
\psline[linewidth=0.5pt, linestyle=dashed](0,0)(1,1)(3,1)
\psline[linewidth=0.5pt, linestyle=dashed](1,1)(1,3)
\psdot(-0.5,0.75)
\uput[d](-0.5,0.75){$O$}
\psline{->}(-0.5,0.75)(-1.75,-0.5)
\uput[r](-1.75,-0.5){$x$}
\psline{->}(-0.5,0.75)(3.5,0.75)
\uput[u](3.5,0.75){$y$}
\psline{->}(-0.5,0.75)(-0.5,3.5)
\uput[r](-0.5,3.5){$z$}
\psdots(0,2)(1,3)(3,3)(3,1)(2,0)(2,2)(0,0)(1,1)(1.6,1.4)
\uput[l](1.6,1.4){$p$}
\end{pspicture*}
\caption{Eight points on the boundary of the $\varepsilon-$disc of the point $p$ which are connected to $p$ by only one geodesic.}
\label{fd8}
\end{center}
\end{figure}

Now we can ask a little more difficult question: Let $p$ be an arbitrary element in the plane $\{(x,y,z)\in\mathbb{R}_{\infty}^3\,|\,x+y+z=0\}\subseteq\mathbb{R}_{\infty}^3$ with induced maximum metric and $\varepsilon$ be any positive real number, then what is the number $\nu(p,\varepsilon)$? Maybe we must ask primarily what the boundary of the $\varepsilon-$disc of $p$ is. Of course, it is the intersection of the plane and the $\varepsilon-$cube of $p$ (i.e. the boundary of the $\varepsilon-$disc of $p$ in $\mathbb{R}_{\infty}^3$). It is surprisingly a regular hexagon (see Figure~\ref{fd9}).

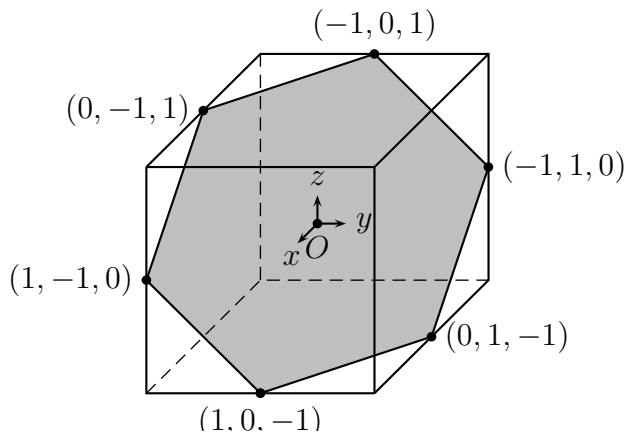
\begin{figure}[h!]
\begin{center}
\begin{pspicture*}(-2,-0.5)(8,6.5)
\psset{unit=0.75}
\pspolygon*[linecolor=lightgray](2,0)(5,1)(6,4)(4,6)(1,5)(0,2)(2,0)
\psline(2,0)(5,1)(6,4)(4,6)(1,5)(0,2)(2,0)
\psdots(2,0)(5,1)(6,4)(4,6)(1,5)(0,2)(2,0)
\psline(0,4)(4,4)(4,0)
\psline(0,4)(2,6)(6,6)(6,2)(4,0)
\psline(6,6)(4,4)
\psline(4,0)(0,0)(0,4)
\psline[linewidth=0.5pt, linestyle=dashed](0,0)(2,2)(6,2)
\psline[linewidth=0.5pt, linestyle=dashed](2,2)(2,6)
\psdot(3,3)
\uput[d](3,3){$O$}
\psline{->}(3,3)(2.65,2.65)
\uput[d](2.55,2.75){$x$}
\psline{->}(3,3)(3.5,3)
\uput[r](3.45,3){$y$}
\psline{->}(3,3)(3,3.5)
\uput[u](3,3.45){$z$}
\uput[d](2,0){$(1,0,-1)$}
\uput[r](5,1){$(0,1,-1)$}
\uput[r](6,4){$(-1,1,0)$}
\uput[u](4,6){$(-1,0,1)$}
\uput[l](1,5){$(0,-1,1)$}
\uput[l](0,2){$(1,-1,0)$}
\end{pspicture*}
\caption{Disc of the origin with radius $1$ in the plane $x+y+z=0$ is a regular hexagon.}
 \label{fd9}
\end{center}
\end{figure}
Obviously, the number $\nu(p,\varepsilon)$ is independent from $p$ and $\varepsilon$. So we can take $p$ as the origin and $\varepsilon=1$. If $q$ is a vertex on the hexagon, $q$ belongs to two sectors of the origin; therefore, any geodesic between the points origin and $q$ must be in the plane of intersection of these two sectors. (Note that all these intersection planes are $x=\pm y$, $x=\pm z$ and $y=\pm z$). Since the intersection of the former plane and the latter plane $x+y+z=0$ is the line passing trough the points $q$ and the origin, there is only one geodesic between these points in the plane $x+y+z=0$ which is the line segment. If $q$ is a point on the hexagon and it is not the vertex, then $q$ is contained by only one sector of the origin; thus, there are infinitely many geodesics between the points $q$ and the origin in the plane $x+y+z=0$. Hence, we have $\nu(p,\varepsilon)=6$ for all points $p$ in the plane $x+y+z=0$ and positive real numbers $\varepsilon$. Then, Proposition~\ref{d2} implies the following corollary:

\begin{corollary}
The plane $x+y+z=0$ in the $\mathbb{R}_{\infty}^3$ is not isometric to the plane $\mathbb{R}_{\infty}^2$.
\end{corollary}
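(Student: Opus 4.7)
The plan is to deploy Proposition~\ref{d2} with the invariant $\nu$, which is preserved by any isometry. It suffices to find a single $\varepsilon>0$ and a single point in each plane at which $\nu$ takes different values. I will show $\nu(p,\varepsilon)=4$ everywhere on $\mathbb{R}^2_{\infty}$ and $\nu(q,\varepsilon)=6$ everywhere on the plane $P=\{x+y+z=0\}$; since $4\neq 6$, an isometry is impossible.

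For $\mathbb{R}^2_{\infty}$ the count is immediate: the $\varepsilon$-sphere of $p$ is an axis-aligned square and, by the discussion after Theorem~\ref{d1}, the points joined to $p$ by a unique geodesic are exactly those in a diagonal position with $p$, namely the four vertices. Every other boundary point lies in a single sector, so $\tau=\infty$ there. For the plane $P$, translation invariance and scaling reduce the problem to $p$ the origin and $\varepsilon=1$; the unit sphere in $P$ is then the regular hexagon obtained by intersecting $P$ with the $\ell_{\infty}$ cube, whose six vertices are the signed permutations of $(1,-1,0)$. Each such vertex $q$ lies in the intersection $S_i^{\varepsilon_i}(0)\cap S_j^{\varepsilon_j}(0)$ of exactly two sectors of the origin, so by Theorem~\ref{d1} any geodesic from $0$ to $q$ in $\mathbb{R}^3_{\infty}$ is contained in the coordinate plane common to those two sectors (one of $x=\pm y$, $x=\pm z$, $y=\pm z$); intersecting that plane with $P$ gives only the line through $0$ and $q$, leaving the segment as the unique geodesic in $P$. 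A non-vertex point $q$ on the hexagon lies in the relative interior of a hexagonal edge and therefore belongs to exactly one sector of the origin; small perturbations of the segment inside $P$ can be kept in that single sector, so Theorem~\ref{d1} produces infinitely many geodesics in $P$ from $0$ to $q$. Consequently only the six vertices contribute to $\nu(0,1)$, giving $\nu\equiv 6$ on $P$.

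The main obstacle is the last point: one must confirm that the many geodesics of $\mathbb{R}^3_{\infty}$ between $0$ and a non-vertex boundary point actually project (or can be realised) as paths lying in $P$. I would argue this directly: pick a small arc in $P$ starting at $0$ in the direction of the hexagonal edge containing $q$ and concatenate it with a straight segment to $q$; since both pieces remain inside the same half-space $\varepsilon_i x_i\ge 0$ that defines the single sector of $0$ containing the edge, the sector-travel criterion of Theorem~\ref{d1} is satisfied and the path is a genuine non-straight geodesic in $P$. This gives $\tau(0,q)=\infty$ for every non-vertex $q$, completes the equality $\nu(0,1)=6$, and finishes the corollary via Proposition~\ref{d2}.
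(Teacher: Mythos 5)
Your proposal is essentially the paper's own proof: compute the invariant $\nu$ via Proposition~\ref{d2}, obtain $\nu\equiv 4$ on $\mathbb{R}^2_{\infty}$ from the four square vertices, and $\nu\equiv 6$ on $x+y+z=0$ by noting that hexagon vertices lie in two sectors of the origin (forcing the unique geodesic into a coordinate plane whose trace on $P$ is a line) while edge-interior points lie in one sector and admit infinitely many geodesics. The only caveat is that your justification of the last step invokes Theorem~\ref{d1} as if staying in the single sector of the origin sufficed, whereas the criterion requires $\alpha(t')\in S_i^{\varepsilon}(\alpha(t))$ pairwise along the path; this is easily repaired and the paper itself leaves the same point informal.
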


Note that the plane $ax+by+cz=d$ isometric to the plane $ax+by+cz=0$ (to see this, consider the map $(x,y,z)\mapsto(x,y,z-\frac{d}{c})$); therefore, in order to classify all planes up to isometry in $\mathbb{R}_{\infty}^3$, it is enough to deal with the planes passing through the origin.

\begin{theorem}
\label{d3}
 The plane $ax+by+cz=0$ in $\mathbb{R}_{\infty}^3$ is not isometric to the plane $\mathbb{R}_{\infty}^2$ if and only if the number $|a|$, $|b|$ and $|c|$ are the edges of a non-degenerate triangle i.e. the inequalities $|a|,|b|,|c|\neq0$, $|a|+|b|>|c|$, $|a|+|c|>|b|$ and $|b|+|c|>|a|$ hold.
\end{theorem}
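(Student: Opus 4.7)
My plan is to prove both directions using the invariant $\nu(\cdot,\cdot)$ together with Proposition~\ref{d2}. By applying the coordinate-reflecting isometries of $\mathbb{R}_\infty^3$ I may assume $a,b,c\geq 0$, and the remark preceding the theorem reduces matters to planes through the origin. For the forward implication I would prove the contrapositive: if $|a|,|b|,|c|$ fail to form a non-degenerate triangle, I exhibit an explicit isometry from $\Pi:=\{ax+by+cz=0\}$ to $\mathbb{R}_\infty^2$. If some coefficient vanishes, say $a=0$ with $b,c>0$, I parametrize $\Pi$ by $(x,y)$ via $z=-by/c$; the induced metric becomes $\max(|\Delta x|,\max(1,b/c)|\Delta y|)$, and the map $(x,y)\mapsto(x,\max(1,b/c)\,y)$ is an isometry onto $\mathbb{R}_\infty^2$. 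If all coefficients are positive but one dominates, say $a\geq b+c$, I parametrize by $(y,z)$ via $x=-(by+cz)/a$; the estimate $|b\Delta y+c\Delta z|/a\leq(b+c)/a\cdot\max(|\Delta y|,|\Delta z|)\leq\max(|\Delta y|,|\Delta z|)$ shows $|\Delta x|$ is dominated by the other two coordinates, so the induced metric collapses to $\max(|\Delta y|,|\Delta z|)$ and $(x,y,z)\mapsto(y,z)$ is an isometry.

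For the reverse implication I assume $a,b,c>0$ satisfy all three strict triangle inequalities and show $\nu(O,1)=6$ in $\Pi$, which together with $\nu=4$ in $\mathbb{R}_\infty^2$ rules out an isometry. First, I check which of the twelve edges of $\partial[-1,1]^3$ meet $\Pi$: an $x$-parallel edge at the corner $(\cdot,\varepsilon_2,\varepsilon_3)$ meets $\Pi$ iff $|\varepsilon_2 b+\varepsilon_3 c|\leq a$, and the strict triangle inequalities pick out exactly two such edges in each axis direction, yielding a hexagonal disc boundary with six vertices. Second, each vertex $q$ of this hexagon lies on a cube edge, hence belongs to the intersection of two sectors $S_i^\varepsilon(O)\cap S_j^\delta(O)$; by Theorem~\ref{d1} any geodesic in $\Pi$ from $O$ to $q$ is forced into the diagonal plane $D$ associated with these two sectors, and since $a,b,c$ are all nonzero we have $\Pi\neq D$, so $\Pi\cap D$ is the single line through $O$ and $q$, making the segment the unique geodesic and giving $\tau_\Pi(O,q)=1$. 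Third, a non-vertex boundary point lies in the interior of a single cube face, hence in exactly one sector; writing such a point as $q=(x_0,y_0,1)\in S_3^+(O)$ with $|x_0|,|y_0|<1$, any geodesic in $\Pi$ has the form $\alpha(t)=(f(t),g(t),t)$ with $f,g$ 1-Lipschitz and $af+bg+ct\equiv 0$, and substituting $f=-(bg+ct)/a$ reduces the Lipschitz constraint on $f$ to $g'(t)\in[-1,(a-c)/b]$ a.e.\ --- an interval of positive length that contains $y_0$ in its open interior. Small Lipschitz perturbations of the straight-line choice $g(t)=y_0 t$ therefore produce infinitely many distinct geodesics, so $\tau_\Pi(O,q)=\infty$. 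Counting the two classes gives $\nu(O,1)=6$ and the non-isometry follows from Proposition~\ref{d2}.

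The step I expect to be hardest is the third one: confirming that the admissible slope interval $[-1,(a-c)/b]$ has positive width and that $y_0$ lies strictly inside it is precisely where all three strict triangle inequalities are used simultaneously (the upper endpoint being less than $1$ uses $a<b+c$, the lower endpoint being less than the upper uses $a+b>c$, and the non-degeneracy of the cube face uses $|a-c|<b$). The enumeration of cube edges meeting $\Pi$ in the first step is mechanical but benefits from the cube's symmetry under sign-flips and coordinate permutations, which compresses the twelve cases into essentially one.
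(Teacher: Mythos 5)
Your proof is correct and follows essentially the same route as the paper: the contrapositive with explicit coordinate-projection isometries for the forward direction, and the invariant $\nu(O,1)=6$ versus $\nu(O,1)=4$ (via the hexagonal cross-section of the unit cube) for the reverse. Your third step, verifying $\tau(O,q)=\infty$ at non-vertex boundary points through the admissible slope interval $[-1,(a-c)/b]$, is in fact more careful than the paper, which carries out that count in detail only for the special plane $x+y+z=0$ and then asserts $\nu(O,1)=6$ in the general case once the cross-section is known to be a hexagon.
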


\begin{proof}
($\Rightarrow$)

Suppose that the numbers $|a|$, $|b|$ and $|c|$ are not the edges of a non-degenerate triangle. If two of these numbers are equal to zero, then the plane is the $xy-$plane, the $xz-$plane or the $yz-$plane and they are obviously isometric to the plane $\mathbb{R}_{\infty}^2$. Now, consider the case where one of these numbers is equal to zero. Without loss of generality, we may assume that $c=0$ and $|a|\geq |b|$. Then, all points on the plane are in the form of $(x,\frac{-a}{b}x,z)$ and the mapping $(x,\frac{-a}{b}x,z)\mapsto(\frac{-a}{b}x,z)$ is an isometry from the plane to $\mathbb{R}_{\infty}^2$ because
\[
|x_1-x_2|\leq \left|\frac{a}{b}\right|\cdot|x_1-x_2|
\]
for all $x_1,x_2\in \mathbb{R}$.

Now, let $a\neq0$, $b\neq0$ and $c\neq0$, but, for example, $|a|+|b|\leq|c|$. Then, all points on the plane are in the form of $(x,y,-\frac{ax+by}{c})$ and the mapping $(x,y,-\frac{ax+by}{c})\mapsto(x,y)$ is an isometry from the plane to $\mathbb{R}_{\infty}^2$ because

\begin{eqnarray*}
\left|\frac{ax_1+by_1}{c}-\frac{ax_2+by_2}{c}\right|&=&\left|\frac{a(x_1-x_2)+b(y_1-y_2)}{c}\right|\\
&\leq&\left|\frac{a}{c}\right|\cdot|x_1-x_2|+\left|\frac{b}{c}\right|\cdot|y_1-y_2|\\
&\leq&\left(\left|\frac{a}{c}\right|+\left|\frac{b}{c}\right|\right)\cdot\max\{\,|x_1-x_2|,\,|y_1-y_2|\,\}\\
&\leq&\max\{\,|x_1-x_2|,\,|y_1-y_2|\,\}\
\end{eqnarray*}
for all $x_1,x_2,y_1,y_2\in\mathbb{R}$.

($\Leftarrow$)

Let the number $|a|$, $|b|$ and $|c|$ be the edges of a non-degenerate triangle. Then, each face of the $1-$cube centered at the origin (i.e. $S_1(O)$ in $\mathbb{R}_{\infty}^3$) intersects the plane $ax+by+cz=0$ along a line segment. The reason for this is the following: For example, if we try to solve the equations $ax+by+cz=0$ and $x=1$ together, we can find infinitely many solutions for $y,z\in[-1,1]$ when $|b|+|c|>|a|$. This implies that the intersection of the plane and the square $x=1$ and $y,z\in[-1,1]$ is a line segment. As a result, intersection of the $1-$cube centered at the origin and the plane $ax+by+cz=0$ is a hexagon which is the boundary of a $1-$disc at the origin in the plane; hence, the number $\nu(O,1)=6$ and it is not isometric to the $\mathbb{R}_{\infty}^2$.
\end{proof}

\begin{remark}
Notice that Theorem \ref{d3} actually can be obtained using L. Nachbin's Theorem (Theorem 3 in \cite{nac}) and A. Moezzi's or M. Pavon's Theorem (Corollary 1.114 in \cite{moe} or Theorem 2.2 in \cite{pav}). But here we have given relatively more elementary proof for it.
\end{remark}

Note that the intersection of a cube and any plane passing through the center of the cube is a tetragon or a hexagon. Actually, Theorem~\ref{d3} says that if the intersection is a tetragon, then the plane is isometric to $\mathbb{R}_{\infty}^2$ and if the intersection is a hexagon, then the plane is not isometric to $\mathbb{R}_{\infty}^2$, so all planes which have the tetragonal intersection with the $1-$cube centered at the origin are isometric to each other. Is it true for the others? Or, are all planes in $\mathbb{R}_{\infty}^3$ which are not isometric to $\mathbb{R}_{\infty}^2$ isometric to each other? Note that every tetragon which is the intersection of a cube and a plane passing through the center of the cube has equal side lengths (see Figure~\ref{fd10}).
\begin{figure}[h!]
\begin{center}
\begin{pspicture*}(-2,-0.5)(8,6.5)
\psset{unit=0.75}
\pspolygon*[linecolor=lightgray](2,5.25)(0,1.25)(4,0.75)(6,4.75)(2,5.25)
\psline(2,5.25)(0,1.25)(4,0.75)(6,4.75)(2,5.25)
\psline(0,4)(4,4)(4,0)
\psline(0,4)(2,6)(6,6)(6,2)(4,0)
\psline(6,6)(4,4)
\psline(4,0)(0,0)(0,4)
\psline[linewidth=0.5pt, linestyle=dashed](0,0)(2,2)(6,2)
\psline[linewidth=0.5pt, linestyle=dashed](2,2)(2,6)
\psdot(3,3)
\uput[d](3,3){$O$}
\uput[d](2,0){$2$}
\uput[d](1.25,3.4){$2$}
\uput[d](2,1){$2$}
\uput[d](5.15,2.75){$2$}
\uput[d](3.8,5.1){$2$}
\end{pspicture*}
\caption{All edges of the tetragon above have same length which is equal to length of any edge of the cube in $\mathbb{R}_{\infty}^3$.}
 \label{fd10}
\end{center}
\end{figure}
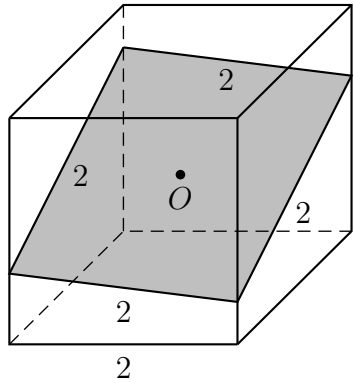
But the situation of the planes which have hexagonal intersection is different. For example, $x+y+z=0$ and $2x+2y+3z=0$ have different hexagons (one is regular, the other one is not) (see Figure~\ref{fd11}).

\begin{figure}[h!]
\begin{center}
\begin{pspicture*}(-2,-1)(18,6.5)
\psset{unit=0.75}
\pspolygon*[linecolor=lightgray](2,0)(5,1)(6,4)(4,6)(1,5)(0,2)(2,0)
\psline(2,0)(5,1)(6,4)(4,6)(1,5)(0,2)(2,0)
\psdots(2,0)(5,1)(6,4)(4,6)(1,5)(0,2)(2,0)
\uput[l](3.6,0.75){$1$}
\uput[l](5.5,2.4){$1$}
\uput[d](5.15,4.94){$1$}
\uput[r](2.4,5.2){$1$}
\uput[r](0.36,3.4){$1$}
\uput[r](1.1,0.86){$1$}
\psline(0,4)(4,4)(4,0)
\psline(0,4)(2,6)(6,6)(6,2)(4,0)
\psline(6,6)(4,4)
\psline(4,0)(0,0)(0,4)
\psline[linewidth=0.5pt, linestyle=dashed](0,0)(2,2)(6,2)
\psline[linewidth=0.5pt, linestyle=dashed](2,2)(2,6)
\psdot(3,3)
\uput[d](3,3){$O$}
\psline{->}(3,3)(2.65,2.65)
\uput[d](2.55,2.75){$x$}
\psline{->}(3,3)(3.5,3)
\uput[r](3.45,3){$y$}
\psline{->}(3,3)(3,3.5)
\uput[u](3,3.45){$z$}
\uput[d](2,0){$(1,0,-1)$}
\uput[r](5,1){$(0,1,-1)$}
\uput[r](6,4){$(-1,1,0)$}
\uput[u](4,6){$(-1,0,1)$}
\uput[l](1,5){$(0,-1,1)$}
\uput[l](0,2){$(1,-1,0)$}

\pspolygon*[linecolor=lightgray](14,0)(15.5,0.5)(17,4)(14,6)(12.5,5.5)(11,2)(14,0)
\psline(14,0)(15.5,0.5)(17,4)(14,6)(12.5,5.5)(11,2)(14,0)
\psdots(14,0)(15.5,0.5)(17,4)(14,6)(12.5,5.5)(11,2)(14,0)
\uput[u](14.65,0.15){$\frac{1}{2}$}
\uput[l](16.4,2.5){$\frac{3}{2}$}
\uput[l](15.35,4.9){$\frac{3}{2}$}
\uput[r](13.05,5.3){$\frac{1}{2}$}
\uput[d](11.9,3.9){$\frac{3}{2}$}
\uput[r](12.7,1.1){$\frac{3}{2}$}
\psline(11,4)(15,4)(15,0)
\psline(11,4)(13,6)(17,6)(17,2)(15,0)
\psline(17,6)(15,4)
\psline(15,0)(11,0)(11,4)
\psline[linewidth=0.5pt, linestyle=dashed](11,0)(13,2)(17,2)
\psline[linewidth=0.5pt, linestyle=dashed](13,2)(13,6)
\psdot(14,3)
\uput[d](14,3){$O$}
\psline{->}(14,3)(13.65,2.65)
\uput[d](13.55,2.75){$x$}
\psline{->}(14,3)(14.5,3)
\uput[r](14.45,3){$y$}
\psline{->}(14,3)(14,3.5)
\uput[u](14,3.45){$z$}
\uput[d](14,0){$(1,\frac{1}{2},-1)$}
\uput[r](15.5,0.5){$(\frac{1}{2},1,-1)$}
\uput[r](17,4){$(-1,1,0)$}
\uput[u](14,6){$(-1,-\frac{1}{2},1)$}
\uput[l](12.5,5.5){$(-\frac{1}{2},-1,1)$}
\uput[l](11,2){$(1,-1,0)$}
\end{pspicture*}
\caption{Two different planes (the left is $x+y+z=0$ and the right is $2x+2y+3z=0$) and their two different hexagons.}
 \label{fd11}
\end{center}
\end{figure}
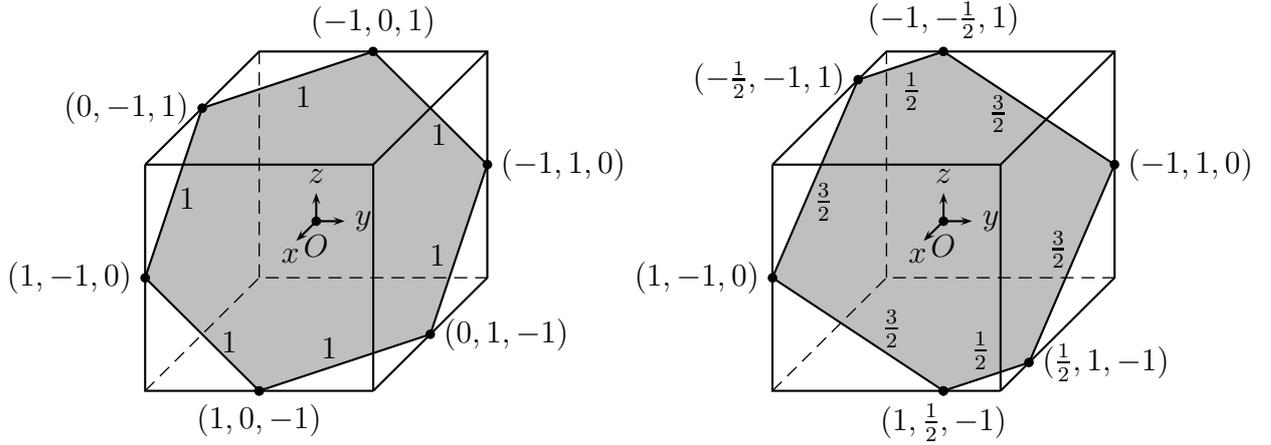

Note that the planes $x+y+z=0$ and $2x+2y+3z=0$ are not isometric. To see this, suppose $f$ is an isometry between these planes. Since a translation is an isometry from any plane to itself, we may assume that $f$ is fixing the origin. In this case, six vertex points on the hexagon in one plane must go isometrically to six vertex points on the hexagon in the other plane under $f$. However, it is obviously impossible. Actually, in order to determine isometry class of planes in $\mathbb{R}_{\infty}^3$, it is enough to determine isometry class of their hexagons ($1-$disc of the origin).

\begin{lemma}
\label{d4}
Let $ax+by+cz=0$ be any plane in $\mathbb{R}_{\infty}^3$. Then planes $\pm ax\pm by\pm cz=0$, $\pm ax\pm bz\pm cy=0$, $\pm ay\pm bx\pm cz=0$, $\pm az\pm bx\pm cy=0$, $\pm ay\pm bz\pm cx=0$ and $\pm az\pm by\pm cx=0$ are isometric to the $ax+by+cz=0$.
\end{lemma}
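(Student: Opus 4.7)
The strategy is to exhibit, for each of the listed planes, an explicit global isometry of $\mathbb{R}^3_\infty$ that carries $ax+by+cz=0$ onto it; restricting such an isometry to the source plane then gives an isometry between the two planes with the induced metrics.

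The key observation is that the maximum metric $d_\infty$ is invariant under two obvious families of linear maps:
\begin{enumerate}
\item sign changes $\sigma_{\varepsilon_1,\varepsilon_2,\varepsilon_3}:(x,y,z)\mapsto(\varepsilon_1 x,\varepsilon_2 y,\varepsilon_3 z)$ with $\varepsilon_i\in\{+,-\}$;
\item coordinate permutations $\pi_\sigma:(x,y,z)\mapsto(x_{\sigma(1)},x_{\sigma(2)},x_{\sigma(3)})$ for $\sigma\in S_3$.
\end{enumerate}
Each of these preserves $\max_i|u_i-v_i|$ trivially, since the multiset $\{|u_i-v_i|\}$ is unchanged. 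Composing sign changes with permutations yields a group of $48$ linear isometries of $\mathbb{R}^3_\infty$ (the symmetries of the unit cube).

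Now I would simply verify what each such isometry does to the plane $P:ax+by+cz=0$. For a sign change $\sigma_{\varepsilon_1,\varepsilon_2,\varepsilon_3}$, the image $\sigma(P)$ consists of the points $(X,Y,Z)=(\varepsilon_1 x,\varepsilon_2 y,\varepsilon_3 z)$ with $ax+by+cz=0$, i.e. $\varepsilon_1 aX+\varepsilon_2 bY+\varepsilon_3 cZ=0$; so $\sigma(P)$ is the plane $\pm ax\pm by\pm cz=0$ with the chosen signs. For a permutation $\pi_\sigma$, the image is the plane obtained by substituting $x\mapsto x_{\sigma^{-1}(1)}$ etc., which produces one of $\pm ax\pm bz\pm cy=0$, $\pm ay\pm bx\pm cz=0$, $\pm az\pm bx\pm cy=0$, $\pm ay\pm bz\pm cx=0$, $\pm az\pm by\pm cx=0$, depending on $\sigma$. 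Composing with sign changes produces every sign pattern, so every plane in the lemma's list appears as the image of $P$ under some element of the group.

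There is no real obstacle here; the only work is the bookkeeping of the $48$ cases, and this is handled uniformly by the observation that the group of signed permutations acts transitively on the set of planes described in the lemma. Hence each such plane is isometric to $P$, with an explicit linear isometry restricted from $\mathbb{R}^3_\infty$.
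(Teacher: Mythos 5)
Your proof is correct and is essentially the paper's own argument: the paper likewise invokes the $48$ signed-permutation maps $(x,y,z)\mapsto(\pm w_1,\pm w_2,\pm w_3)$ with $\{w_1,w_2,w_3\}=\{x,y,z\}$ (the isometry group of the cube) as global isometries of $\mathbb{R}^3_\infty$ carrying the plane $ax+by+cz=0$ onto each plane in the list. Your write-up just spells out the bookkeeping slightly more explicitly than the paper does.
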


For an easy proof, look at the example $(x,y,z)\mapsto(z,-y,x)$. This map is obviously an isometry from the plane $ax+by+cz=0$ to the plane $az-by+cx=0$. Indeed, all the mappings $(x,y,z)\mapsto(\pm w_1,\pm w_2,\pm w_3)$, where $\{w_1,w_2,w_3\}=\{x,y,z\}$, give us the needed isometries and more. This is because, for instance, $(x,y,z)\mapsto(-x,-y,-z)$ is an isometry from $ax+by+cz=0$ to itself. Note that there are exactly 48 such maps and these are actually all the elements of isometry group of the cube. For example, $(x,y,z)\mapsto(x,-z,y)$ is $\frac{\pi}{2}$ counter clockwise rotation around the axis $x$ and $(x,y,z)\mapsto(x,-y,z)$ is the reflection with respect to the $xz-$plane. Denote this group by $G$ and let $X$ be the set of all planes passing through the origin in $\mathbb{R}_{\infty}^3$. Actually, $X$ can be thought as the set of all hexagons (on the $1-$cube of $\mathbb{R}_{\infty}^3$) passing through the origin. Then, $G$ acts on the $X$: Let the element $(x,y,z)\mapsto(w_1,w_2,w_3)$ be denoted by $(w_1,w_2,w_3)$ and defined by
\[
(w_1,w_2,w_3)\cdot (ax+by+cz=0):=aw_1+bw_2+w_3=0
\]
where $\{w_1,w_2,w_3\}=\{\pm x,\pm y,\pm z\}$.

Thus, Lemma~\ref{d4} can be restated as follows: A plane passing through the origin (in $\mathbb{R}_{\infty}^3$) is isometric to every plane which belongs to its orbit (according to group action above).

\begin{figure}[h!]
\begin{center}
\begin{pspicture*}(-2,-0.5)(8,6.5)
\psset{unit=0.75}
\psline(0,4)(4,4)(4,0)
\psline(0,4)(2,6)(6,6)(6,2)(4,0)
\psline(6,6)(4,4)
\psline(4,0)(0,0)(0,4)
\psline(0,3.25)(1.5,4)
\psline(0,2.5)(0.75,4)
\psline(0,1.5)(0.75,0)
\psline(0,0.75)(1.5,0)
\psline(3.25,0)(4,1.5)
\psline(2.5,0)(4,0.75)
\psline(4,2.5)(3.25,4)
\psline(4,3.25)(2.5,4)
\psline(1.5,4)(0.4,4.4)
\psline(0.75,4)(0.8,4.8)
\psline(1.6,5.6)(3.5,6)
\psline(1.2,5.2)(2.75,6)
\psline(5.25,6)(5.2,5.2)
\psline(4.5,6)(5.6,5.6)
\psline(3.25,4)(4.8,4.8)
\psline(2.5,4)(4.4,4.4)
\psline(4,3.25)(4.8,4.8)
\psline(4,2.5)(4.4,4.4)
\psline(4,0.75)(4.8,0.8)
\psline(4,1.5)(4.4,0.4)
\psline(6,3.5)(5.6,1.6)
\psline(6,2.75)(5.2,1.2)
\psline(6,5.25)(5.2,5.2)
\psline(6,4.5)(5.6,5.6)
\psline[linewidth=0.5pt, linestyle=dashed](0,0)(2,2)(6,2)
\psline[linewidth=0.5pt, linestyle=dashed](2,2)(2,6)
\psdot(3,3)
\uput[d](3,3){$O$}
\end{pspicture*}
\caption{The edges of the $24$ isometric hexagons (of the $24$ isometric planes).}
 \label{fd12}
\end{center}
\end{figure}
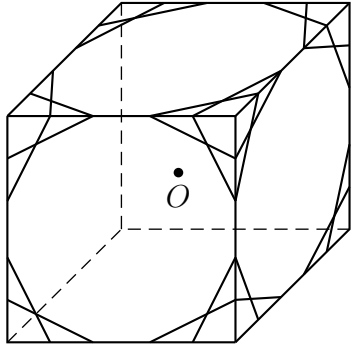
Let $ax+by+cz=0$ be an element of $X$ such that it is not isometric to the $\mathbb{R}_{\infty}^2$. Consider the case where $|a|,|b|$ and $|c|$ are distinct numbers. Then, the orbit of this plane has exactly $24$ elements because its stabilizer contains only two elements: Identity and $(-x,-y,-z)$. What are these $24$ isometric planes or their hexagons? Note that a hexagon can be determined by only one edge because the plane containing this edge and the origin is unique and the hexagon is intersection of this plane and the cube. An example of the edges of $24$ isometric planes is in Figure~\ref{fd12}.

\begin{figure}[h!]
\begin{center}
\begin{pspicture*}(-0.5,-0.5)(15,6.5)
\psset{unit=0.75}
\psline(0,4)(4,4)(4,0)
\psline(0,4)(2,6)(6,6)(6,2)(4,0)
\psline(6,6)(4,4)
\psline(4,0)(0,0)(0,4)
\psline(0,1)(3,4)
\psline(0,3)(3,0)
\psline(1,0)(4,3)
\psline(4,1)(1,4)
\psline(3,4)(1.5,5.5)
\psline(0.5,4.5)(5,6)
\psline(3,6)(4.5,4.5)
\psline(1,4)(5.5,5.5)
\psline(4,1)(5.5,5.5)
\psline(4,3)(5.5,1.5)
\psline(6,5)(4.5,0.5)
\psline(6,3)(4.5,4.5)
\psline[linewidth=0.5pt, linestyle=dashed](0,0)(2,2)(6,2)
\psline[linewidth=0.5pt, linestyle=dashed](2,2)(2,6)
\psdot(3,3)
\uput[d](3,3){$O$}

\psline(8,4)(12,4)(12,0)
\psline(8,4)(10,6)(14,6)(14,2)(12,0)
\psline(14,6)(12,4)
\psline(12,0)(8,0)(8,4)
\psline(8,2.5)(9.5,4)
\psline(8,1.5)(9.5,0)
\psline(10.5,0)(12,1.5)
\psline(12,2.5)(10.5,4)
\psline(9.5,4)(8.8,4.8)
\psline(9.2,5.2)(11.5,6)
\psline(12.5,6)(13.2,5.2)
\psline(10.5,4)(12.8,4.8)
\psline(12,2.5)(12.8,4.8)
\psline(12,1.5)(12.8,0.8)
\psline(14,3.5)(13.2,1.2)
\psline(14,4.5)(13.2,5.2)
\psline[linewidth=0.5pt, linestyle=dashed](8,0)(10,2)(14,2)
\psline[linewidth=0.5pt, linestyle=dashed](10,2)(10,6)
\psdot(11,3)
\uput[d](11,3){$O$}
\end{pspicture*}
\caption{Two examples of the edges of the $12$ isometric hexagons (of the $12$ isometric planes).}
 \label{fd13}
\end{center}
\end{figure}
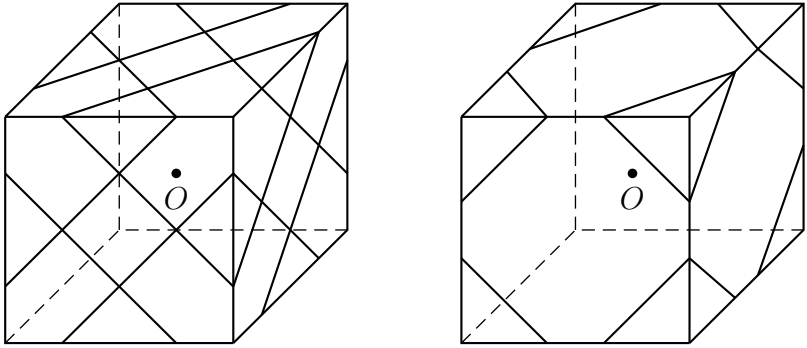

Now, consider the case only two of the numbers $|a|,|b|$ and $|c|$ are equal. Say $a=b$. Then, the orbit of this plane has exactly $12$ elements because its stabilizer contains only four elements: Identity, $(-x,-y,-z)$, $(y,x,z)$ and $(-y,-x,-z)$. Two examples of the edges of $12$ isometric planes are given in Figure~\ref{fd13}.

\begin{figure}[h!]
\begin{center}
\begin{pspicture*}(-2,-0.5)(8,6.5)
\psset{unit=0.75}
\psline(0,4)(4,4)(4,0)
\psline(0,4)(2,6)(6,6)(6,2)(4,0)
\psline(6,6)(4,4)
\psline(4,0)(0,0)(0,4)
\psline(0,2)(2,4)
\psline(0,2)(2,0)
\psline(2,0)(4,2)
\psline(4,2)(2,4)
\psline[linewidth=0.5pt, linestyle=dashed](0,0)(2,2)(6,2)
\psline[linewidth=0.5pt, linestyle=dashed](2,2)(2,6)
\psdot(2.5,2.5)
\uput[d](2.5,2.5){$O$}
\end{pspicture*}
\caption{The edges of the $4$ isometric hexagons (of the $4$ isometric planes).}
 \label{fd14}
\end{center}
\end{figure}
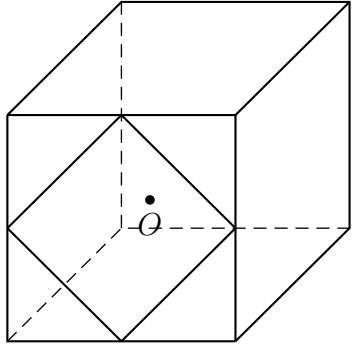

The last case is $|a|=|b|=|c|$; that is, the plane is $x+y+z=0$. Then, the orbit of this plane has exactly $4$ elements. An example of the edges of $4$ isometric planes is in Figure~\ref{fd14}.

\begin{lemma}
\label{d5}
Let $|a|,|b|$ and $|c|$ be the edges of a non-degenerate triangle. Then the plane $ax+by+cz=0$ is isometric to only planes passing through the origin which belong to its orbit.
\end{lemma}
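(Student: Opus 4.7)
First I apply Lemma~\ref{d4} to each plane: by cycling signs and permuting coordinates, I may replace $ax+by+cz=0$ by a plane in its orbit whose coefficients satisfy $0<a\le b\le c$ (and similarly for any isometric plane $Q:\,a'x+b'y+c'z=0$ we work with a representative satisfying $0<a'\le b'\le c'$). It then suffices to show that an isometry $\varphi:P\to Q$ forces $(a',b',c')$ to be a positive scalar multiple of $(a,b,c)$, for then $P$ and $Q$ are literally the same plane. After post-composing $\varphi$ with a translation of $\mathbb{R}^3_\infty$ I may also assume $\varphi(O)=O$; then $\varphi$ sends the unit disc in $P$ onto the unit disc in $Q$, and in particular carries the hexagonal boundary $H=P\cap[-1,1]^3$ bijectively onto $H'=Q\cap[-1,1]^3$, sending vertices to vertices by Proposition~\ref{d2}.

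Next I locate and compute. The six vertices of $H$ lie on edges of the cube; listing them in cyclic order and using the strict triangle inequalities, the side lengths of $H$ in $d_\infty$ come out to $\ell_1,\ell_2,\ell_3,\ell_1,\ell_2,\ell_3$ with
\[
\ell_1=\frac{a+c-b}{a},\qquad \ell_2=\frac{b+c-a}{b},\qquad \ell_3=\frac{a+b-c}{a},
\]
each lying in $(0,2)$. On the other hand, a direct check shows that every pair of non-adjacent vertices of $H$ is at $d_\infty$-distance exactly $2$: opposite vertices are antipodes, and any two-apart pair has a coordinate differing by $2$ while the other coordinate differences stay $\le 2$ by the triangle inequality. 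Hence adjacency among the six vertices is intrinsic to the metric, so $\varphi$ induces a dihedral symmetry of the $6$-cycle. This forces the cyclic side-length pattern of $H'$ to agree with that of $H$ up to rotation and reflection; in particular $\{\ell_1,\ell_2,\ell_3\}=\{\ell_1',\ell_2',\ell_3'\}$ as multisets.

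Finally I recover $(a,b,c)$ up to scaling from this data. The identity $\ell_1+\ell_3=2$ is automatic, and a short calculation factoring through $(a-b)(c-a-b)$ gives the chain $\ell_2\ge\ell_1\ge 1\ge\ell_3$, with $\ell_2=\ell_1$ precisely when $a=b$ and $\ell_1=\ell_3$ precisely when $b=c$. In the generic case $a<b<c$ all three lengths are distinct with $\ell_2$ the maximum; setting $p=\ell_1-1$ and $q=\ell_2-1$ in $(0,1)$ yields
\[
\frac{b}{a}=\frac{1-p}{1-q},\qquad \frac{c}{a}=\frac{1-pq}{1-q},
\]
so $(a,b,c)$ is determined up to a positive scalar. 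The degenerate subcases ($a=b<c$, $a<b=c$, $a=b=c$) each produce a distinctive multiset, respectively $\{r,r,2-r\}$ with $r>1$, $\{1,1,s\}$ with $s>1$, and $\{1,1,1\}$, from which the remaining ratio is read off immediately. In every case this forces $(a',b',c')=\lambda(a,b,c)$ for some $\lambda>0$, so $Q=P$ and the lemma follows.

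The main obstacle is the last step: one has to verify that the cyclic edge-length pattern of the hexagon cannot coincide for two planes with non-proportional coefficient triples. This is handled by treating the generic case and the three degenerate cases separately, confirming that the multisets of side lengths they produce are mutually exclusive and, within each case, uniquely invertible back to the ratios $a:b:c$.
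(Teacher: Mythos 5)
Your proof is correct, and it rests on the same guiding idea as the paper's argument for Lemma~\ref{d5} --- that the hexagon $P\cap\partial([-1,1]^3)$ with its cyclic sequence of $d_\infty$ edge lengths is a complete invariant --- but where the paper offers only a one-line reductio and a pointer to Figure~\ref{fd15}, you supply the actual substance. The two ingredients you add are exactly the ones the paper leaves implicit: (i) the observation that non-adjacent vertices of the hexagon are always at $d_\infty$-distance exactly $2$ while adjacent ones are at distance $<2$, so that an origin-fixing isometry (which sends vertices to vertices because they are the six points of $S_1(O)$ with $\tau=1$) must preserve the cyclic adjacency structure and hence the edge-length pattern up to dihedral symmetry; and (ii) the explicit formulas $\ell_1=(a+c-b)/a$, $\ell_2=(b+c-a)/b$, $\ell_3=(a+b-c)/a$ together with their inversion via $p=\ell_1-1=(c-b)/a$, $q=\ell_2-1=(c-a)/b$, which shows that the multiset $\{\ell_1,\ell_2,\ell_3\}$ determines $a:b:c$ once $0<a\le b\le c$. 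I verified the computations: with vertices $(1,-1,\tfrac{b-a}{c})$, $(1,\tfrac{c-a}{b},-1)$, $(\tfrac{c-b}{a},1,-1)$ and their antipodes, the cyclic pattern is $\ell_2,\ell_3,\ell_1,\ell_2,\ell_3,\ell_1$ (the same as yours up to rotation), the chain $\ell_2\ge\ell_1\ge1\ge\ell_3$ holds with the equality cases you state, and the four case distinctions are mutually exclusive. Two small points you should make explicit: before normalizing $Q$ to $0<a'\le b'\le c'$ you need all three coefficients of $Q$ nonzero and satisfying the triangle inequalities, which follows from Theorem~\ref{d3} since $Q$ is isometric to $P$ and hence not to $\mathbb{R}^2_\infty$; and $H$ should be written as $P\cap\partial([-1,1]^3)$ --- the set $P\cap[-1,1]^3$ is the hexagonal disc, whose boundary is the hexagon carrying the six vertices.
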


For the proof, assume not. This implies that there are two isometric hexagons such that one does not belong to the others orbit. But this is impossible (see Figure~\ref{fd15}).

\begin{figure}[h!]
\begin{center}
\begin{pspicture*}(-0.5,-0.5)(15,6.5)
\psset{unit=0.75}
\psline(0,4)(4,4)(4,0)
\psline(0,4)(2,6)(6,6)(6,2)(4,0)
\psline(6,6)(4,4)
\psline(4,0)(0,0)(0,4)
\psline(6,4)(5.5,5.5)(1,4)(0,2)
\psline[linecolor=red](6,4.5)(5.5,5.5)(1.75,4)(0,1.5)
\psline[linecolor=yellow](6,5)(5.5,5.5)(2.5,4)(0,1)
\psline[linecolor=green](6,5.5)(5.5,5.5)(3.25,4)(0,0.5)
\psline[linewidth=0.5pt, linestyle=dashed](0,0)(2,2)(6,2)
\psline[linewidth=0.5pt, linestyle=dashed](2,2)(2,6)
\psdot(3,3)
\uput[d](3,3){$O$}

\psline(8,4)(12,4)(12,0)
\psline(8,4)(10,6)(14,6)(14,2)(12,0)
\psline(14,6)(12,4)
\psline(12,0)(8,0)(8,4)
\psline(14,4)(13.5,5.5)(9,4)(8,2)
\psline[linecolor=red](14,3.5)(13,5)(9,4)(8,2.5)
\psline[linecolor=green](14,3)(12.5,4.5)(9,4)(8,3)
\psline[linewidth=0.5pt, linestyle=dashed](8,0)(10,2)(14,2)
\psline[linewidth=0.5pt, linestyle=dashed](10,2)(10,6)
\psdot(11,3)
\uput[d](11,3){$O$}
\end{pspicture*}
\caption{All the line segments on the $z=1$ plane are equal with respect to the maximum metric. But the line segments on the $x=1$ plane are not equal on the left and the line segments on the $y=1$ plane are not equal on the right.}
\label{fd15}
\end{center}
\end{figure}
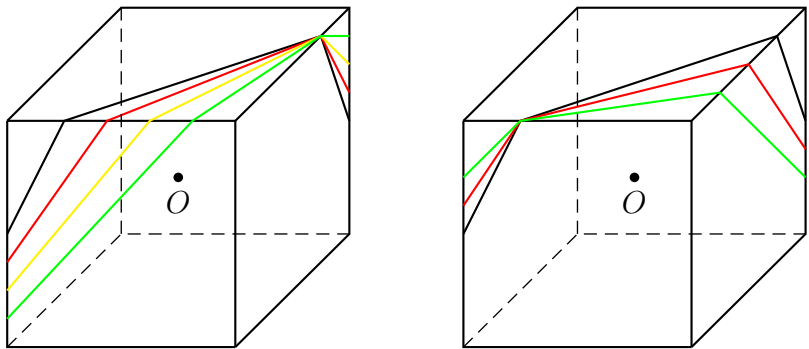

Note that Lemma~\ref{d5} says that the number of the isometry classes of planes passing through the origin in $\mathbb{R}_{\infty}^3$ can be identified by the number of the similarity classes of triangles in the Euclidean plane. If the numbers $|a|$, $|b|$ and $|c|$ are not lengths of edges of a non-degenerate triangle, then the plane $ax+by+cz=0$ is isometric to the $\mathbb{R}_{\infty}^2$, so such planes form one isometry class. If the numbers $|a|,|b|,|c|$ and $|a'|,|b'|,|c'|$ are lengths of edges of two non-degenerate triangles, then these triangles are similar if and only if the planes $ax+by+cz=0$ and $a'x+b'y+c'z=0$ are isometric. As a result, we get the following corollary:

\begin{corollary}
Let $ax+by+cz=d$ be a plane in $\mathbb{R}_{\infty}^3$.
\begin{itemize}
\item[i)]
If the numbers $|a|,|b|$ and $|c|$ are not lengths of edges of a non-degenerate triangle, then this plane is isometric to the $\mathbb{R}_{\infty}^2$, so such planes are isometric to each other.
\item[ii)]
If the numbers $|a|,|b|$ and $|c|$ are lengths of edges of a non-degenerate triangle, then this plane is isometric to only planes whose equations can be written as $aw_1+bw_2+cw_3=D$ where $\{w_1,w_2,w_3\}=\{\pm x,\pm y,\pm z\}$ and $D$ is any real number.
\end{itemize}
\end{corollary}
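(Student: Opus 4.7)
The plan is to reduce the whole statement to planes through the origin and then assemble the previously established results (Theorem~\ref{d3}, Lemma~\ref{d4}, and Lemma~\ref{d5}). The key reduction is the observation already recorded in the text: for any real $d$, the translation $(x,y,z)\mapsto(x,y,z-d/c)$ (or an analogous shift in $x$ or $y$ if $c=0$) is an isometry of $\mathbb{R}_{\infty}^3$ carrying the plane $ax+by+cz=d$ onto $ax+by+cz=0$. Since this is an isometry of the ambient space, it restricts to an isometry between the two parallel planes. Therefore, for either part of the Corollary, I can replace every plane $ax+by+cz=d$ by $ax+by+cz=0$ without loss of generality.

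For part (i), after this reduction I apply Theorem~\ref{d3} directly: if $|a|,|b|,|c|$ are not the edge lengths of a non-degenerate triangle, then $ax+by+cz=0$ is isometric to $\mathbb{R}_{\infty}^2$. Since isometry is an equivalence relation, all planes falling under (i) are isometric to each other through $\mathbb{R}_{\infty}^2$.

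For part (ii), the forward inclusion is the easy direction: given any sign choice and any permutation $\{w_1,w_2,w_3\}=\{\pm x,\pm y,\pm z\}$ and any real $D$, the plane $aw_1+bw_2+cw_3=D$ is isometric to $aw_1+bw_2+cw_3=0$ by the translation reduction, which in turn is isometric to $ax+by+cz=0$ by Lemma~\ref{d4} (since $aw_1+bw_2+cw_3=0$ is exactly an element of the orbit described there), and finally isometric to $ax+by+cz=d$ again by translation. For the converse, suppose $ax+by+cz=d$ is isometric to some plane $a'x+b'y+c'z=d'$. Composing with the two translations above, I obtain an isometry between $ax+by+cz=0$ and $a'x+b'y+c'z=0$. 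Since $|a|,|b|,|c|$ are the sides of a non-degenerate triangle, Lemma~\ref{d5} forces $a'x+b'y+c'z=0$ to lie in the $G$-orbit of $ax+by+cz=0$; that is, the defining equation of this plane can be written as $aw_1+bw_2+cw_3=0$ for some $\{w_1,w_2,w_3\}=\{\pm x,\pm y,\pm z\}$. Translating back by $-d'$ in the appropriate coordinate, the original plane $a'x+b'y+c'z=d'$ is defined by $aw_1+bw_2+cw_3=D$ for the corresponding real number $D$.

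The main obstacle is really the converse half of (ii), and all of its force is packed into Lemma~\ref{d5}; once that lemma is granted, the Corollary is just bookkeeping around the translation reduction. In writing out the converse I would also remark that two algebraically different equations may define the same plane (since any nonzero scalar multiple gives the same set), so the phrase \emph{can be written as} in the statement is what absorbs this ambiguity and makes the conclusion precise.
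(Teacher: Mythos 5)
Your proposal is correct and follows essentially the same route as the paper: reduce to planes through the origin via the translation isometry, then invoke Theorem~\ref{d3} for part (i) and Lemmas~\ref{d4} and~\ref{d5} for the two directions of part (ii). The paper leaves the corollary as an immediate consequence of exactly these ingredients, so your write-up simply makes the bookkeeping explicit (and your remark about equations defining the same plane up to scalar multiples is a reasonable clarification).
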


Email: kompaktuzay@gmail.com

Anadolu University, Faculty of Science, Department of Mathematics, Eskisehir/ TURKEY

\end{document}